\documentclass[11pt]{article}

\usepackage[a4paper, margin=3.5cm]{geometry}
\usepackage{amsmath, amssymb, amsthm, mathtools}
\usepackage[dvipsnames]{xcolor}
\usepackage{hyperref}
\usepackage{enumitem}
\usepackage{microtype}

\hypersetup{
  colorlinks=true,
  linkcolor=blue!50!black,
  citecolor=blue!50!black,
  urlcolor=blue!50!black
}

\newtheorem{theorem}{Theorem}
\newtheorem{proposition}[theorem]{Proposition}

\newtheorem{conjecture}[theorem]{Conjecture}
\theoremstyle{definition}
\newtheorem{remark}[theorem]{Remark}
\newtheorem{example}[theorem]{Example}
\newtheorem{definition}[theorem]{Definition}

\newcommand{\F}{\mathbb{F}}
\newcommand{\rs}{\mathrm{rowspace}\,}
\newcommand{\spn}{\mathrm{span}\,}

\title{On set-like sunflower-free families of subspaces\\over finite fields}
\author{Kamil Otal\\
  \small T\"UB\.ITAK B\.ILGEM UEKAE National Research Institute of Electronics and Cryptology\\
  \small Gebze, Kocaeli, T\"urkiye\\
  \small \texttt{kamil.otal@gmail.com}}

\date{\today}

\begin{document}
\maketitle

\begin{abstract}
The Erd\H{o}s--Rado sunflower problem admits two natural analogues in finite vector spaces, corresponding to two different ways of generalising the set-theoretic notion of a sunflower. The first, used by Ihringer and Kupavskii [FFA 110 (2026) 102746], requires the petals to be in general position over the kernel; the second, used in the subspace codes literature (cf.\ Etzion--Raviv [DAM 186 (2015) 87-97], Blokhuis--De Boeck--D'haeseleer [DCC 90 (2022) 2101-2111]), requires only that the kernel equals the pairwise intersection of distinct petals. We refer to the second version as a \emph{set-like sunflower}, following Ihringer and Kupavskii.

In this note, we focus on the set-like setting. We observe that the constructions of Ihringer--Kupavskii, although correct under their (stronger) definition, do not yield set-like sunflower-free families: we exhibit explicit set-like sunflowers inside their Example~3.1. We then present a construction of set-like $s$-sunflower-free families of $k$-spaces, based on a manipulated version of the lifting construction. To our knowledge, this is the first systematic construction tailored to this setting.

\medskip
\noindent\textbf{MSC:} 05D05, 51E23.

\noindent\textbf{Keywords:} sunflower, $\Delta$-system, subspace code, vector space, $q$-analogue, MRD code, lifting construction.
\end{abstract}

\section{Introduction}

A family of $s$ sets $S_1, \ldots, S_s$ is called an \emph{$s$-sunflower} (or \emph{$\Delta(s)$-system}) with kernel $K$ if $K = S_i \cap S_j$ for all distinct $i, j$.\footnote{The term ``$\Delta$-system'' goes back to Erd\H{o}s and Rado. The term ``sunflower'' appeared in a paper by Deza and Frankl~\cite{deza-frankl} and since the late 1980s has been used by the Boolean circuit complexity community, until it eventually replaced the $\Delta$-system terminology.}\footnote{Here, a family is a set, not a multiset.} A family of $k$-sets is \emph{$s$-sunflower-free} if no $s$ of its members form an $s$-sunflower. In 1960, Erd\H{o}s and Rado~\cite{erdos-rado} proved that for $s \geq 3$ every $s$-sunflower-free family of $k$-sets has size at most $k!(s-1)^k$, and conjectured a much sharper bound:

\begin{conjecture}[Erd\H{o}s--Rado sunflower conjecture]
For every $s \geq 3$ there exists a constant $C$ such that every $s$-sunflower-free family of $k$-sets $\mathcal{F}$ satisfies $|\mathcal{F}| \leq (Cs)^k$.
\end{conjecture}

A complete $k$-partite hypergraph with parts of size $s-1$ gives an $s$-sunflower-free family of $k$-sets of size $(s-1)^k$. A breakthrough of Alweiss, Lovett, Wu, and Zhang~\cite{alwz}, refined slightly by Bell, Chueluecha and Warnke~\cite{bcw}, pushed the bound much closer to the conjectured value:

\begin{proposition}[\cite{alwz,bcw}]
For every $s \geq 3$, any $s$-sunflower-free family of $k$-sets $\mathcal{F}$ satisfies $|\mathcal{F}| \leq (Cs \log k)^k$ for some absolute constant $C$.
\end{proposition}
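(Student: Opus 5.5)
The bound in the proposition is the ALWZ ``spread'' (robust sunflower) argument as sharpened by Bell--Chueluecha--Warnke. I would follow that route.

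\emph{Step 1: reduction to spread families.} Call a family $\mathcal{F}$ of $k$-sets \emph{$R$-spread} if for every set $T$ the number of members containing $T$ is at most $R^{-|T|}|\mathcal{F}|$. Suppose $|\mathcal{F}| > R^k$ and take a set $T$ that maximizes $|\mathcal{F}_T|R^{|T|}$, where $\mathcal{F}_T = \{F\setminus T : T\subseteq F\in\mathcal{F}\}$. Then the link $\mathcal{F}_T$ is $R$-spread, consists of $(k-|T|)$-sets, and is nonempty. Moreover $|T|<k$, since a single set gives $R^k < |\mathcal{F}|$.

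An $s$-sunflower in $\mathcal{F}_T$ lifts to an $s$-sunflower in $\mathcal{F}$ by adding $T$ to the kernel. It therefore suffices to show: \emph{every $R$-spread family of $k$-sets with $R = Cs\log k$ contains $s$ pairwise disjoint members.} Disjoint members form a sunflower with empty kernel.

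\emph{Step 2: disjointness from random colorings.} Color the ground set uniformly at random with $s$ colors, and let $W_i$ be color class $i$, so each element lies in $W_i$ with probability $p = 1/s$. If each $W_i$ contains some member of $\mathcal{F}$ with probability greater than $1-1/s$, then a union bound gives a coloring in which every class contains a member. Those $s$ members are pairwise disjoint, which finishes the proof. Everything thus reduces to the key lemma:
\[
\Pr[\,W \text{ contains no member of } \mathcal{F}\,] < 1/s
\]
for a $p$-random set $W$ with $p=1/s$ and $\mathcal{F}$ being $R$-spread with $R = C s\log k$.

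\emph{Step 3: the spread lemma.} This is the main obstacle. I would use the iterative ``minimal fragment'' argument. Write $W$ as a union of $m \approx \log k$ independent random sets, each of density about $p/m$. In each round, for every $F\in\mathcal{F}$, choose some $F'\in\mathcal{F}$ with $F'\subseteq F\cup W_{\text{round}}$ minimizing $|F'\setminus W_{\text{round}}|$, and record the fragment $F'\setminus W_{\text{round}}$.

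The core counting claim is as follows. The probability that more than a $1/2$ fraction of the fragments have size exceeding $k/2$ (relative to the current size) is exponentially small. This is proved by encoding each bad pair $(W,F)$ through $W\cup F$ together with a subset of bounded size, and using spreadness to control how many $F$ contain a given fragment.

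After about $\log_2 k$ halvings, the fragments have size $0$, which means some member lies entirely inside $W$. The failure probabilities must sum to less than $1/s$. That is where the requirement $R \gtrsim s\log k$ arises: each round uses density $p/\log k$, and spreadness $R$ must beat its inverse by a constant factor.

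For the BCW refinement, I would use Rao's or Tao's entropy/counting variant of this lemma. That variant replaces the $\log k\log\log k$ loss of ALWZ by $\log k$, either by taking geometrically varying round densities or by a direct counting bound. Choosing $C$ large enough to absorb all constants then gives $|\mathcal{F}|\le (Cs\log k)^k$.
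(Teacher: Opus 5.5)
\textbf{Gap in Step~2.} The paper gives no proof of this proposition; it quotes Alweiss--Lovett--Wu--Zhang and Bell--Chueluecha--Warnke. Your architecture is the right one: pass to an $R$-spread link, find $s$ disjoint members via a random colouring, and prove a spread lemma. But Step~2 is the reduction that only yields Rao's bound $(Cs\log(sk))^k$. The idea that removes the $\log s$, which is exactly the Bell--Chueluecha--Warnke contribution, is missing.

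In fact the key lemma you reduce to is false when $s$ is large compared with $k$. Take the complete $k$-partite family with parts $P_1,\dots,P_k$ of size $R=Cs\log k$ (assume it is an integer). A partial transversal $T$ lies in exactly $R^{k-|T|}=R^{-|T|}|\mathcal F|$ members, so the family is $R$-spread. But every member meets $P_1$, so for $s\ge 2$
\[
\Pr[\,W\text{ contains no member}\,]\;\ge\;\Pr[\,W\cap P_1=\emptyset\,]=(1-1/s)^{R}\;\ge\;e^{-R/(s-1)}\;\ge\;k^{-2C}.
\]
This exceeds $1/s$ as soon as $s>k^{2C}$, so no absolute $C$ makes Step~2 work for all $s,k$.

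This matches what the Rao/Tao spread lemma actually gives. A $\delta$-random set fails with probability at most $\varepsilon$ once $R\gtrsim\delta^{-1}\log(k/\varepsilon)$, because driving the expected fragment size below $\varepsilon$ takes about $\log_2(k/\varepsilon)$ halving rounds, not $\log_2 k$. With $\delta=\varepsilon=1/s$ this is $R\asymp s\log(sk)$. Your sentence ``the failure probabilities must sum to less than $1/s$; that is where $R\gtrsim s\log k$ arises'' hides exactly this extra cost. Rao's and Tao's variants do not remove it either: they improve ALWZ to $\log(sk)$, not to $\log k$.

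\textbf{The fix.} Avoid asking for failure probability $1/s$ at all. Colour the ground set with $2s$ colours, so each class is a $\delta$-random set with $\delta=1/(2s)$. Require only that each class misses all members with probability $<1/2$. Then the expected number of classes containing a member exceeds $s$, so some colouring has at least $s$ such classes, which gives $s$ pairwise disjoint members. A constant failure probability costs only $R\gtrsim\delta^{-1}\log k=O(s\log k)$ in the spread lemma, which gives the stated bound. Separately, the statement needs $k\ge 2$, since $\log 1=0$.
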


\subsection{Sunflowers in finite vector spaces}

Let $q$ be a prime power, $\F_q$ the finite field with $q$ elements, and $\F_q^n$ the $n$-dimensional vector space over $\F_q$. When generalising the notion of a sunflower to finite vector spaces, two natural choices arise.

\begin{definition}[General-position sunflower; Ihringer--Kupavskii~\cite{ik2026}]\label{def:gp-sunflower}
A family $S_1, \ldots, S_s$ of $k$-spaces of $\F_q^n$ is a \emph{(general-position) $s$-sunflower} with kernel $K$ if there exists a $d$-space $K$ with $K = S_i \cap S_j$ for all distinct $i, j$, and $S_1/K, \ldots, S_s/K$ are in general position, i.e.\
$\dim(S_1/K + \cdots + S_s/K) = \sum_{i=1}^s \dim(S_i/K)$.
Equivalently, $\dim(S_1 + \cdots + S_s) = d + s(k-d)$.
\end{definition}

\begin{definition}[Set-like sunflower; cf.~Etzion--Raviv~\cite{etzion-raviv}]\label{def:set-like-sunflower}
A family $S_1, \ldots, S_s$ of $k$-spaces of $\F_q^n$ is a \emph{set-like $s$-sunflower} with kernel $K$ if $K = S_i \cap S_j$ for all distinct $i, j$.
\end{definition}

The terminology \emph{set-like sunflower} is due to Ihringer and Kupavskii~\cite{ik2026}, who note that a parallel literature focusing only on pairwise intersections has been developed in the context of subspace codes; see, for example, Etzion and Raviv~\cite{etzion-raviv} and Blokhuis, De Boeck and D'haeseleer~\cite{bdh}.

Every general-position sunflower is a set-like sunflower, but not conversely. The two definitions therefore lead to two distinct extremal problems. In the set case the two notions coincide, because three sets pairwise intersecting in a common set $K$ automatically satisfy the analogous general-position condition; in the vector-space case they do not, due to the failure of the inclusion-exclusion principle for sums of three or more subspaces (see Section~\ref{sec:incl-excl}).

A family of $k$-spaces is \emph{(general-position) $s$-sunflower-free} (respectively, \emph{set-like $s$-sunflower-free}) if it contains no $s$-sunflower in the corresponding sense. Since a set-like sunflower is a more permissive notion, set-like sunflower-freeness is the stronger property: every set-like $s$-sunflower-free family is also general-position $s$-sunflower-free, but not conversely.

\subsection{Related work and contributions}

Ihringer and Kupavskii~\cite{ik2026} studied the Erd\H{o}s--Rado problem under Definition~\ref{def:gp-sunflower}. They adapted the upper bound of Erd\H{o}s and Rado to obtain
\[
|\mathcal{F}| \;\leq\; \prod_{i=1}^k [i(s-1)]_q \;\leq\; \Bigl(\tfrac{q}{q-1}\Bigr)^{\!k} q^{(s-1)\binom{k+1}{2} - k},
\]
and, by an iterative nesting of lifted MRD codes, constructed general-position $s$-sunflower-free families approaching this bound. The upper bound applies to set-like sunflower-free families as well, since the latter form a subclass of the former.

In the subspace codes tradition, the focus has typically been on related notions: equidistant subspace codes (cf.~\cite{etzion-raviv}), subspace packings (cf.~\cite{ekop-subspace-packings}), and sunflower-bound type results for $k$-spaces pairwise intersecting in a point (cf.~\cite{bdh}), all in the spirit of Definition~\ref{def:set-like-sunflower}.

In this note we focus on the set-like setting. Our contributions are:
\begin{itemize}[leftmargin=*, itemsep=2pt]
  \item We show that the constructions of Ihringer--Kupavskii~\cite{ik2026}, although correct as general-position $s$-sunflower-free families, are \emph{not} set-like $s$-sunflower-free; we exhibit explicit set-like $3$-sunflowers in their Example~3.1 (Section~\ref{sec:incl-excl}).
  \item We present a construction of set-like $s$-sunflower-free families of $k$-spaces of $\F_q^n$ with size $q^{n-k}$, based on a manipulated version of the lifting construction (Section~\ref{sec:construction}). To our knowledge this is the first systematic construction tailored to this setting.
\end{itemize}

\section{Preliminaries}

\subsection{Maximum rank distance codes}

Let $\F_q^{k \times \ell}$ denote the set of $k \times \ell$ matrices over $\F_q$. The \emph{rank distance} on $\F_q^{k \times \ell}$ is $d_R(A, B) := \mathrm{rank}(A - B)$. A subset $\mathcal{C} \subseteq \F_q^{k \times \ell}$ with $|\mathcal{C}| \geq 2$ is a \emph{rank-metric code}; its \emph{minimum rank distance} is $d_R(\mathcal{C}) := \min\{d_R(A,B) : A, B \in \mathcal{C}, A \neq B\}$.

\begin{proposition}[Singleton-like bound~\cite{delsarte}]
For any rank-metric code $\mathcal{C} \subseteq \F_q^{k \times \ell}$,
\[
|\mathcal{C}| \;\leq\; q^{\max\{k, \ell\}(\min\{k, \ell\} - d_R(\mathcal{C}) + 1)}.
\]
\end{proposition}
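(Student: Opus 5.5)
The plan is the standard puncturing argument. Without loss of generality assume $k \le \ell$ (otherwise transpose every codeword: transposition preserves rank, hence $d_R$ and $|\mathcal{C}|$, and swaps $k$ and $\ell$). Write $d := d_R(\mathcal{C})$; then $1 \le d \le \min\{k,\ell\} = k$, since $|\mathcal{C}|\ge 2$ and every $k\times \ell$ matrix has rank at most $k$. The target inequality becomes $|\mathcal{C}| \le q^{\ell(k-d+1)}$.

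Consider the map $\pi:\mathcal{C}\to \F_q^{(k-d+1)\times \ell}$ that deletes the last $d-1$ rows of each matrix, keeping the first $k-d+1$ rows. I claim $\pi$ is injective. Suppose $A\neq B$ in $\mathcal{C}$ with $\pi(A)=\pi(B)$. Then $A-B$ has its first $k-d+1$ rows equal to zero, so only its last $d-1$ rows can be nonzero. Hence $\mathrm{rank}(A-B)\le d-1$, which contradicts $d_R(A,B)\ge d$. So $\pi$ is injective, and $|\mathcal{C}|\le |\F_q^{(k-d+1)\times \ell}| = q^{\ell(k-d+1)}$. This is exactly the claimed bound with $\max\{k,\ell\}=\ell$ and $\min\{k,\ell\}=k$.

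No step here is a real obstacle. The only point needing care is choosing which dimension to puncture. One must delete rows along the \emph{smaller} dimension, so that the surviving factor is $\max\{k,\ell\}$. Puncturing along the other side would give the weaker bound $q^{\min\{k,\ell\}(\max\{k,\ell\}-d+1)}$. That is why the transposition reduction is made at the start.
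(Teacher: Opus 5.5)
Your proof is correct and complete. The paper gives no proof of this bound: it quotes it from Delsarte and uses it only to define MRD codes. So there is no argument in the paper to compare yours against, and your puncturing argument serves as a self-contained justification.

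It fits the statement as given in two respects:
\begin{itemize}
\item It never uses linearity of $\mathcal{C}$, which matters because the proposition concerns arbitrary rank-metric codes.
\item The transposition reduction is exactly what puts $\max\{k,\ell\}$ in front. As you note, deleting along the larger side instead would weaken the exponent by $(\max\{k,\ell\}-\min\{k,\ell\})(d-1)$.
\end{itemize}

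Conceptually, your injectivity step is a code--anticode argument. The matrices whose first $k-d+1$ rows vanish form an additive subgroup whose elements all have rank at most $d-1$. Its $q^{\ell(k-d+1)}$ cosets partition $\F_q^{k\times\ell}$, and each coset contains at most one codeword.

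Your route also gives the equality case for free: $\mathcal{C}$ is MRD exactly when $\pi$ is a bijection. That is, every $(k-d+1)\times\ell$ top block occurs in exactly one codeword. For the field representation of $\F_{q^k}$ used later in the paper ($k=\ell=d$), this says each vector of $\F_q^{1\times k}$ is the first row of exactly one codeword.
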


A rank-metric code attaining this bound is called a \emph{maximum rank distance (MRD) code}.

One useful family of MRD codes comes from the matrix representation of $\F_{q^k}$. Let $f$ be a monic irreducible polynomial of degree $k$ over $\F_q$, and $A$ its $k \times k$ companion matrix. Then $\spn(I, A, \ldots, A^{k-1})$, the set of $\F_q$-polynomials in $A$ of degree less than $k$, is a faithful matrix representation of $\F_{q^k}$; see~\cite[\S2.5]{lidl}.

\begin{proposition}\label{prop:matrix-rep}
Let $\mathcal{C} \subseteq \F_q^{k \times k}$ be the matrix representation of $\F_{q^k}$. Then $\mathcal{C}$ is an MRD code with $d_R(\mathcal{C}) = k$.
\end{proposition}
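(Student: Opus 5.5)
We must show two things: $\mathcal{C}=\spn(I,A,\ldots,A^{k-1})$ has exactly $q^k$ elements, and every nonzero element of $\mathcal{C}$ is invertible. Since $\mathcal{C}$ is an $\F_q$-linear subspace, $d_R(\mathcal{C})$ is the minimum rank of a nonzero codeword. So invertibility of all nonzero elements gives $d_R(\mathcal{C})=k$. The Singleton-like bound with $k=\ell$ and $d_R=k$ then reads $|\mathcal{C}|\le q^{k(k-k+1)}=q^k$, and equality makes $\mathcal{C}$ an MRD code.

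For the cardinality, I would use that the minimal polynomial of a companion matrix equals its characteristic polynomial $f$, which has degree $k$. Hence $I,A,\ldots,A^{k-1}$ are linearly independent over $\F_q$: a nontrivial relation would give a nonzero polynomial of degree $<k$ annihilating $A$. Concretely, one can also see this by letting $A$ act on $e_1$, which produces $e_1,e_2,\ldots,e_k$ up to the companion-matrix convention. Therefore $\dim\mathcal{C}=k$ and $|\mathcal{C}|=q^k$.

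For invertibility, I would take a nonzero $g\in\F_q[x]$ with $\deg g<k$ and consider $M=g(A)$. Since $f$ is irreducible and $\deg g<\deg f$, we have $\gcd(f,g)=1$. Bézout then gives $u f+v g=1$, and evaluating at $A$ with $f(A)=0$ (Cayley--Hamilton) yields $v(A)g(A)=I$. So $M$ is invertible, of rank $k$. Equivalently, $\F_q[A]\cong\F_q[x]/(f)\cong\F_{q^k}$ is a field, which is the faithful representation cited from \cite{lidl}.

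No step is a real obstacle. The only point needing care is the identification of the minimal polynomial of $A$ with $f$, which gives both the dimension count and the isomorphism with $\F_{q^k}$; I would justify it via the cyclic vector $e_1$.
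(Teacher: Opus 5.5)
Your proposal is correct and follows the paper's approach. The paper's proof is the one-line observation that $\mathcal{C}$ is linear and every nonzero element is invertible. You fill in the details it leaves implicit: $|\mathcal{C}|=q^k$ via the minimal polynomial, invertibility via Bézout, and equality in the Singleton-like bound giving the MRD property.
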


The proof is immediate: $\mathcal{C}$ is linear, and every nonzero element is invertible.

\subsection{Subspace codes and lifting}

Let $\mathcal{P}_q(n)$ denote the set of all subspaces of $\F_q^n$. The \emph{subspace distance} is
\[
d_S(U, V) := \dim U + \dim V - 2 \dim(U \cap V).
\]
A subset $\mathcal{F} \subseteq \mathcal{P}_q(n)$ with $|\mathcal{F}| \geq 2$ is a \emph{subspace code}. If all members have the same dimension $k$, then $\mathcal{F} \subseteq \mathcal{G}_q(n, k)$ is a \emph{constant-dimension code}.

The \emph{lifting construction} produces large constant-dimension codes from MRD codes:

\begin{proposition}[Silva--Kschischang--K\"otter~\cite{silva}]\label{prop:lifting}
Let $\mathcal{C} \subseteq \F_q^{k \times (n-k)}$ be an MRD code with $d_R(\mathcal{C}) = \min\{k, n-k\}$, and let $I$ denote the $k \times k$ identity matrix. Then
\[
\mathcal{F}_{\mathcal{C}} \;=\; \{ \rs [I \mid A] : A \in \mathcal{C}\}
\]
is a constant-dimension subspace code of size $q^{\max\{k, n-k\}(\min\{k, n-k\} - d_R(\mathcal{C}) + 1)}$ and minimum distance $d_S(\mathcal{F}_{\mathcal{C}}) = 2 d_R(\mathcal{C})$.
\end{proposition}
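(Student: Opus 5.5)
The plan is to verify the two claims of Proposition~\ref{prop:lifting} separately: the size of $\mathcal{F}_{\mathcal{C}}$ and its minimum subspace distance. Both follow from a single linear-algebra identity, namely that for $A, B \in \F_q^{k \times (n-k)}$,
\[
\dim\bigl(\rs[I \mid A] \cap \rs[I \mid B]\bigr) \;=\; k - \mathrm{rank}(A - B).
\]

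For the size, I would first check that every $\rs[I \mid A]$ has dimension exactly $k$, since the identity block gives full row rank. Next I would show that the map $A \mapsto \rs[I \mid A]$ is injective. The row space has a unique basis in reduced row echelon form, and $[I \mid A]$ is already in that form with pivots in the first $k$ columns. So $\rs[I \mid A] = \rs[I \mid B]$ forces $A = B$. Hence $|\mathcal{F}_{\mathcal{C}}| = |\mathcal{C}|$, and since $\mathcal{C}$ is MRD, this equals the bound of the Singleton-like bound (Delsarte's proposition above) with $d_R(\mathcal{C})$ inserted. This part is immediate.

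For the distance, I would compute the intersection through the sum:
\[
\dim\bigl(\rs[I\mid A] + \rs[I\mid B]\bigr) = \mathrm{rank}\begin{pmatrix} I & A \\ I & B\end{pmatrix}.
\]
Subtracting the first block row from the second gives $\begin{pmatrix} I & A \\ 0 & B-A\end{pmatrix}$, whose rank is $k + \mathrm{rank}(B-A)$. Dimension counting then gives
\[
\dim(U \cap V) = 2k - \bigl(k + \mathrm{rank}(A-B)\bigr) = k - \mathrm{rank}(A-B).
\]
Therefore
\[
d_S\bigl(\rs[I\mid A], \rs[I\mid B]\bigr) = 2k - 2\bigl(k - \mathrm{rank}(A-B)\bigr) = 2\, d_R(A,B).
\]
Taking the minimum over distinct pairs, which correspond bijectively to distinct codewords by injectivity, yields $d_S(\mathcal{F}_{\mathcal{C}}) = 2 d_R(\mathcal{C})$.

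The only step needing real care is the rank computation of the stacked block matrix, that is, justifying that the rank of $\begin{pmatrix} I & A \\ 0 & B-A\end{pmatrix}$ is $k + \mathrm{rank}(B-A)$. I would argue this by further column operations: subtracting multiples of the first $k$ columns clears $A$ to $0$ without touching the zero block, so the matrix becomes block diagonal. Everything else is bookkeeping. The hypothesis $d_R(\mathcal{C}) = \min\{k, n-k\}$ enters only through the size formula, where it fixes the exponent.
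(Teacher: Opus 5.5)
Your proof is correct and is the standard argument. The paper does not prove this proposition itself, since it is quoted from Silva--Kschischang--K\"otter, but your block row reduction $\begin{pmatrix} I & A \\ I & B\end{pmatrix} \to \begin{pmatrix} I & A \\ 0 & B-A\end{pmatrix}$, giving $\dim(U\cap V) = k - \mathrm{rank}(A-B)$, is the same computation the paper carries out in the pairwise-intersection step of the proof of Theorem~\ref{thm:construction}; together with your reduced-row-echelon injectivity argument and the definition of MRD for the size count, nothing is missing.
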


\section{The role of inclusion-exclusion}\label{sec:incl-excl}

For two subspaces $U, V$ of a finite-dimensional vector space,
\[
\dim(U + V) = \dim U + \dim V - \dim(U \cap V),
\]
mirroring the inclusion-exclusion identity for two sets. The analogue \emph{fails} for three subspaces: in general,
\begin{align*}
\dim(U + V + W) \;\neq\; & \dim U + \dim V + \dim W \\
& - \dim(U \cap V) - \dim(U \cap W) - \dim(V \cap W) + \dim(U \cap V \cap W).
\end{align*}
For example, take $U = \spn\{(1,0,0), (0,1,0)\}$, $V = \spn\{(1,0,0), (0,0,1)\}$, $W = \spn\{(1,0,0), (0,1,1)\}$ in $\F_q^3$. Each pair intersects in $\spn\{(1,0,0)\}$, the triple intersection is the same line, yet $U + V + W = \F_q^3$, so $\dim(U+V+W) = 3$, whereas the right-hand side above evaluates to $2 + 2 + 2 - 1 - 1 - 1 + 1 = 4$.

This failure is precisely what separates Definitions~\ref{def:gp-sunflower} and~\ref{def:set-like-sunflower}. In a general-position $s$-sunflower the identity $\dim(S_1+\cdots+S_s) = d + s(k-d)$ holds by definition (the general-position condition is exactly this equality). In a set-like sunflower no such identity is required, and the sum dimension may be strictly less than $d + s(k-d)$.

\subsection{A set-like sunflower inside the Ihringer--Kupavskii construction}

We illustrate the gap between the two definitions by exhibiting a set-like sunflower inside the smallest explicit construction of~\cite{ik2026}.

\begin{example}[\cite{ik2026}, Example 3.1]\label{ex:ik-example}
Fix a $1$-space $T$ in $\F_q^5$, and choose $q^2 + 1$ $3$-spaces $\Pi_1, \ldots, \Pi_{q^2+1}$ through $T$ with $\Pi_i \cap \Pi_j = T$ for $i \neq j$. In $\Pi_1$ take all $q^2 + q + 1$ $2$-spaces; in each $\Pi_i$ with $i \geq 2$ take the $q^2$ $2$-spaces disjoint from $T$. The resulting family $\mathcal{F}$ has $q^4 + q^2 + q + 1$ members.
\end{example}

Ihringer and Kupavskii prove in~\cite{ik2026} that $\mathcal{F}$ is general-position $3$-sunflower-free. Their argument crucially uses the equality $\dim(S_1+S_2+S_3) = d + 3(k-d)$, which holds for general-position sunflowers by definition. The next example shows that $\mathcal{F}$ contains set-like $3$-sunflowers, so the same family is \emph{not} set-like $3$-sunflower-free.

\begin{example}\label{ex:counter}
Take $q = 2$, $T = \spn\{[1,0,0,0,0]\}$, and
\[
\Pi_1 = \rs\begin{pmatrix} 1 & 0 & 0 & 0 & 0 \\ 0 & 0 & 1 & 1 & 0 \\ 0 & 0 & 0 & 0 & 1 \end{pmatrix}.
\]
The three $2$-spaces of $\Pi_1$ that contain $T$ are
\[
S_1 = \rs\begin{pmatrix} 1 & 0 & 0 & 0 & 0 \\ 0 & 0 & 1 & 1 & 0 \end{pmatrix}, \quad
S_2 = \rs\begin{pmatrix} 1 & 0 & 0 & 0 & 0 \\ 0 & 0 & 0 & 0 & 1 \end{pmatrix},
\]
\[
S_3 = \rs\begin{pmatrix} 1 & 0 & 0 & 0 & 0 \\ 0 & 0 & 1 & 1 & 1 \end{pmatrix}.
\]
A direct check shows $S_i \cap S_j = T$ for all distinct $i, j$, so $\{S_1, S_2, S_3\}$ is a set-like $3$-sunflower with kernel $T$. Note that $\dim(S_1 + S_2 + S_3) = \dim(\Pi_1) = 3 < 1 + 3 \cdot 1 = 4$, so $\{S_1, S_2, S_3\}$ is not a general-position sunflower, consistent with the analysis of~\cite{ik2026}.
\end{example}

More generally, the same phenomenon occurs for arbitrary $q \geq 2$: the $q+1$ $2$-spaces of $\Pi_1$ that contain $T$ form a pencil, and any three of them constitute a set-like $3$-sunflower with kernel $T$. Analogous set-like sunflowers occur in the more involved constructions of~\cite[\S3.3, \S3.4]{ik2026}.

This is not a contradiction with~\cite{ik2026}: their constructions are designed under Definition~\ref{def:gp-sunflower} and remain correct under that definition. The observation is rather that the two extremal problems are genuinely distinct, and that an alternative construction is required for the set-like setting.

\section{A construction for set-like sunflower-free families}\label{sec:construction}

We now give a construction of set-like $s$-sunflower-free families of $k$-spaces. The construction is inspired by the construction of almost affinely disjoint (AAD) spaces in~\cite{aad-isit, aad-2024, aad-ffa}; in particular, the simultaneous use of $A$ and $A^2$ originates from these references.

We use the following standard identity:

\begin{proposition}\label{prop:sum-complement}
Let $U_1, U_2, \ldots, U_s$ be subspaces of $\F_q^n$. Then
\[
(U_1 + U_2 + \cdots + U_s)^\perp \;=\; U_1^\perp \cap U_2^\perp \cap \cdots \cap U_s^\perp.
\]
\end{proposition}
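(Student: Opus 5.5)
The plan is to derive the identity from the characterisation of the orthogonal complement with respect to the standard bilinear form $\langle x,y\rangle=\sum_i x_iy_i$ on $\F_q^n$. For a subspace $W$ we have $W^\perp=\{y:\langle x,y\rangle=0 \text{ for all } x\in W\}$. I will show both inclusions directly. Nondegeneracy, or the dimension formula $\dim W^\perp=n-\dim W$, is not needed, and neither is the property $W\cap W^\perp=0$, which can fail over finite fields. This matters because the identity has to hold even when the $U_i$ are not anisotropic.

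For the inclusion ``$\subseteq$'', each $U_i$ is contained in $U_1+\cdots+U_s$. Orthogonal complementation reverses inclusions: if $W\subseteq W'$, then any $y$ orthogonal to all of $W'$ is orthogonal to all of $W$. Hence $(U_1+\cdots+U_s)^\perp\subseteq U_i^\perp$ for every $i$, and therefore it lies in the intersection of the $U_i^\perp$.

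For the inclusion ``$\supseteq$'', take $y\in\bigcap_i U_i^\perp$ and an arbitrary element $x=u_1+\cdots+u_s$ of the sum with $u_i\in U_i$. Bilinearity gives $\langle x,y\rangle=\sum_i\langle u_i,y\rangle=0$, since each term vanishes. Hence $y\in(U_1+\cdots+U_s)^\perp$.

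No step is a real obstacle. The only point to keep in mind is that the argument uses linearity of the form in its first argument and nothing else. In particular, it does not rely on any decomposition $\F_q^n=W\oplus W^\perp$, so it is valid over every $\F_q$.
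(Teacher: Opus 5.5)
Your argument is correct and complete: reversal of inclusions under $\perp$ gives $(U_1+\cdots+U_s)^\perp\subseteq\bigcap_i U_i^\perp$, and linearity of the form in its first argument gives the reverse inclusion, with no need for nondegeneracy. The paper gives no proof of its own here (it calls the identity standard and refers to Otal's thesis), and your two-inclusion argument is that standard proof. Note, though, that the paper's later use of the identity, $\dim(U\cap V)=n-\dim(U^\perp+V^\perp)$ for $U,V\in\mathcal{F}$, also relies on $\dim W^\perp=n-\dim W$ and $(W^\perp)^\perp=W$; these do hold for the standard dot product on $\F_q^n$ even though isotropic vectors may exist.
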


For a proof see, e.g.,~\cite{otal-thesis}. This identity is a standard tool in the subspace codes literature, used for instance in~\cite{ekop-subspace-packings}.

\begin{theorem}\label{thm:construction}
Let $n \geq 2\ell + 1$ and $k = n - \ell$. Let $I$ denote the $\ell \times \ell$ identity matrix over $\F_q$, and let $\mathcal{C} \subseteq \F_q^{\ell \times \ell}$ be the matrix representation of $\F_{q^\ell}$ over $\F_q$, as in Proposition~\ref{prop:matrix-rep}. For a matrix $A$, write $[A]_1$ for its first column, and let $\mathbf{0}$ denote the $\ell \times (n - 2\ell - 1)$ zero matrix. Define
\[
\mathcal{G} \;=\; \bigl\{ \rs [I \mid A \mid [A^2]_1 \mid \mathbf{0}] : A \in \mathcal{C}\bigr\}.
\]
Then $\mathcal{F} = \{U^\perp : U \in \mathcal{G}\}$ is a set-like $s$-sunflower-free family of $k$-spaces of $\F_q^n$ of size $q^\ell = q^{n-k}$ for every $s \geq 3$.
\end{theorem}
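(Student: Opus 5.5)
The plan is to dualise, using Proposition~\ref{prop:sum-complement}, and turn the set-like sunflower condition on $\mathcal{F}$ into a condition on pairwise sums in $\mathcal{G}$. That condition can then be checked by an explicit row reduction that exploits the commutativity of $\mathcal{C}$.

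\textbf{Step 1: dimension and size.} Each generator matrix $[I \mid A \mid [A^2]_1 \mid \mathbf{0}]$ has the identity block, so it has rank $\ell$. Hence every $U \in \mathcal{G}$ is an $\ell$-space and $U^\perp$ is a $k$-space. Distinct $A$ give distinct row spaces, since $I$ in the first block fixes the reduced echelon form. Since $U \mapsto U^\perp$ is injective, $|\mathcal{F}| = |\mathcal{C}| = q^\ell$.

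\textbf{Step 2: reduction to $s=3$ and to pairwise sums.} Any $3$ members of a set-like $s$-sunflower form a set-like $3$-sunflower, so it suffices to treat $s = 3$. Take $U_A, U_B, U_C \in \mathcal{G}$ with distinct $A, B, C \in \mathcal{C}$. By Proposition~\ref{prop:sum-complement},
\[
U_A^\perp \cap U_B^\perp = (U_A + U_B)^\perp .
\]
Therefore $\{U_A^\perp, U_B^\perp, U_C^\perp\}$ is a set-like sunflower if and only if $U_A+U_B = U_A+U_C = U_B+U_C$. I will show that already $U_A + U_B \neq U_A + U_C$ whenever $B \neq C$.

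\textbf{Step 3: canonical form of $U_A + U_B$.} This is the key computation. Subtract the rows to get $[0 \mid B-A \mid (B^2-A^2)e_1 \mid \mathbf{0}]$. Since $B - A \neq 0$ lies in $\mathcal{C}$, it is invertible by Proposition~\ref{prop:matrix-rep}. Multiply by $(B-A)^{-1}$, using that $\mathcal{C}$ is commutative, so that $(B-A)^{-1}(B^2-A^2) = A+B$. This gives the block
\[
[0 \mid I \mid (A+B)e_1 \mid \mathbf{0}].
\]
Eliminating the middle block from the first row then gives
\[
[I \mid 0 \mid (A^2 - A(A+B))e_1 \mid \mathbf{0}] = [I \mid 0 \mid -ABe_1 \mid \mathbf{0}].
\]
So $U_A+U_B$ is a $2\ell$-space with reduced echelon generator
\[
\begin{pmatrix} I & 0 & -ABe_1 & \mathbf{0}\\ 0 & I & (A+B)e_1 & \mathbf{0}\end{pmatrix}.
\]
By uniqueness of the reduced echelon form, $U_A+U_B$ determines the vector $(A+B)e_1$.

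\textbf{Step 4: conclusion.} The map $\mathcal{C} \to \F_q^\ell$, $X \mapsto Xe_1$, is linear with trivial kernel, because every nonzero $X \in \mathcal{C}$ is invertible. Hence $(A+B)e_1 = (A+C)e_1$ forces $B = C$. So $U_A+U_B \neq U_A+U_C$ for $B \neq C$, and no set-like $3$-sunflower exists in $\mathcal{F}$, hence none for any $s \ge 3$.

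The main obstacle is Step 3. One must make sure the row reduction legitimately produces a canonical form: this needs invertibility of $B-A$ and commutativity, so that $(B-A)^{-1}(B^2-A^2) = A+B$. The column $[A^2]_1$ appended to the lifted MRD code is exactly what makes this pairwise-sum invariant visible. The hypothesis $n \ge 2\ell+1$ is needed only so that this extra column fits.
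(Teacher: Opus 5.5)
Your proof is correct and is essentially the paper's argument: it uses the same row reduction, driven by $(B-A)^{-1}(B^2-A^2)=A+B$, and the same injectivity of $X\mapsto Xe_1$ on $\mathcal{C}$. The only difference is framing: you show $U_A+U_B\neq U_A+U_C$ by reading off the reduced-echelon invariant $(A+B)e_1$, whereas the paper proves the equivalent fact $\dim(U_A+U_B+U_C)=2\ell+1>2\ell$, i.e.\ that the triple intersection is strictly smaller than each pairwise intersection.
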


\begin{proof}
Since $\mathcal{C}$ has $q^\ell$ elements, $|\mathcal{F}| = q^\ell$. Each $U \in \mathcal{G}$ is an $\ell$-space, so each $U^\perp$ is a $(n-\ell) = k$-space.

A set-like $3$-sunflower-free family is automatically set-like $s$-sunflower-free for $s \geq 3$ (any $s$-sunflower contains a $3$-sub-sunflower with the same kernel). It therefore suffices to show that $\mathcal{F}$ is set-like $3$-sunflower-free.

By Proposition~\ref{prop:sum-complement}, for any $U, V, W \in \mathcal{F}$ corresponding to $A, B, C \in \mathcal{C}$,
\[
\dim(U \cap V) = n - \dim(U^\perp + V^\perp), \qquad \dim(U \cap V \cap W) = n - \dim(U^\perp + V^\perp + W^\perp).
\]

\textit{Pairwise intersection.} For distinct $A, B \in \mathcal{C}$, row reduction gives
\[
\mathrm{rank}\begin{pmatrix} I & A & [A^2]_1 & \mathbf{0} \\ I & B & [B^2]_1 & \mathbf{0} \end{pmatrix}
= \mathrm{rank}\begin{pmatrix} I & A & [A^2]_1 & \mathbf{0} \\ 0 & B - A & [B^2 - A^2]_1 & \mathbf{0} \end{pmatrix}
= 2\ell,
\]
since $B - A$ is invertible (Proposition~\ref{prop:matrix-rep}). Hence $\dim(U \cap V) = n - 2\ell$.

\textit{Triple intersection.} For pairwise distinct $A, B, C \in \mathcal{C}$,
\begin{align*}
\mathrm{rank}\begin{pmatrix} I & A & [A^2]_1 & \mathbf{0} \\ I & B & [B^2]_1 & \mathbf{0} \\ I & C & [C^2]_1 & \mathbf{0} \end{pmatrix}
&\;=\; \mathrm{rank}\begin{pmatrix} I & A & [A^2]_1 & \mathbf{0} \\ 0 & B - A & [B^2 - A^2]_1 & \mathbf{0} \\ 0 & C - A & [C^2 - A^2]_1 & \mathbf{0} \end{pmatrix} \\
&\;=\; \mathrm{rank}\begin{pmatrix} I & A & [A^2]_1 & \mathbf{0} \\ 0 & I & [B + A]_1 & \mathbf{0} \\ 0 & I & [C + A]_1 & \mathbf{0} \end{pmatrix} \\
&\;=\; \mathrm{rank}\begin{pmatrix} I & A & [A^2]_1 & \mathbf{0} \\ 0 & I & [B + A]_1 & \mathbf{0} \\ 0 & 0 & [C - B]_1 & \mathbf{0} \end{pmatrix} \;=\; 2\ell + 1.
\end{align*}
Here we used that $\mathcal{C}$ is a field (so closed under multiplication, inverses, and so that $(B-A)^{-1}(B^2 - A^2) = B + A$), together with the column-extraction identities $X[Y]_1 = [XY]_1$ and $[X]_1 + [Y]_1 = [X+Y]_1$ valid for all $X, Y \in \mathcal{C}$. Since $C \neq B$ implies $C - B$ is invertible, $[C-B]_1$ is a nonzero column and contributes rank $1$. Hence $\dim(U \cap V \cap W) = n - (2\ell + 1) = n - 2\ell - 1$.

\textit{Conclusion.} For pairwise distinct $U, V, W \in \mathcal{F}$,
\[
\dim(U \cap V \cap W) \;=\; n - 2\ell - 1 \;<\; n - 2\ell \;=\; \dim(U \cap V).
\]
If $\{U, V, W\}$ were a set-like $3$-sunflower with kernel $K$, then $K = U \cap V = U \cap W = V \cap W$, which forces $K = U \cap V \cap W$ and hence $\dim(U \cap V) = \dim(U \cap V \cap W)$, contradicting the strict inequality above. Therefore $\mathcal{F}$ contains no set-like $3$-sunflower.
\end{proof}

\begin{remark}
The smallest meaningful instance of Theorem~\ref{thm:construction} has parameters $q = 2$, $n = 5$, $\ell = 2$ (so $k = 3$), giving a set-like $s$-sunflower-free family of $4$ three-spaces in $\F_2^5$.
\end{remark}

\section{Concluding remarks and open problems}

The construction in Theorem~\ref{thm:construction} produces set-like $s$-sunflower-free families of size $q^{n-k}$. This is far below the upper bound $\prod_{i=1}^k [i(s-1)]_q$ inherited from~\cite{ik2026}, which applies to the set-like setting as well: the proof of that upper bound (a Erd\H{o}s--Rado-style pigeonhole over $1$-subspaces) uses only the pairwise intersection structure of a sunflower and therefore goes through verbatim under Definition~\ref{def:set-like-sunflower}.

Several natural questions remain:

\begin{enumerate}[leftmargin=*, itemsep=2pt]
  \item \textbf{Order of magnitude.} What is the order of magnitude of the largest set-like $s$-sunflower-free family of $k$-spaces of $\F_q^n$? Is it polynomially, or exponentially, smaller than the largest general-position $s$-sunflower-free family?
  \item \textbf{Iterated constructions.} Can the construction of Theorem~\ref{thm:construction} be iterated or nested, in the spirit of the iterative MRD-nesting of~\cite{ik2026}, in a way that preserves set-like sunflower-freeness?
  \item \textbf{Dependence on $n$.} The set-like problem is genuinely dimension-dependent (as is also the case for the related sunflower-bound problems studied in~\cite{bdh}). For fixed $k$ and $s$, what is the dependence of the extremal size on $n$?
  \item \textbf{Bridging the two definitions.} Is there a natural intermediate notion (for instance, requiring general position only up to triples, or only on the level of $r$-fold sums for fixed $r$) that interpolates between the two regimes?
\end{enumerate}



\end{document}